\newcommand{\Span}{\mathop{\rm span}\,}
\newcommand{\RR}{\mathbb{R}}
\newcommand{\NN}{\mathbb{N}}
\newcommand{\TT}{\mathbb{T}}
\newcommand{\Ab}{{\boldsymbol{A}}}
\newcommand{\Bb}{{\boldsymbol{B}}}
\newcommand{\Qb}{{\boldsymbol{Q}}}
\newcommand{\Ub}{{\boldsymbol{U}}}
\newcommand{\Vb}{{\boldsymbol{V}}}
\newcommand{\Wb}{{\boldsymbol{W}}}
\newcommand{\eb}{{\boldsymbol{e}}}
\newcommand{\ub}{{\boldsymbol{u}}}
\newcommand{\vb}{{\boldsymbol{v}}}
\newcommand{\wb}{{\boldsymbol{w}}}
\newcommand{\zb}{{\boldsymbol{z}}}
\newcommand{\epsb}{{\boldsymbol{\epsilon}}}
\newtheorem{theorem}{Theorem}[section]
\newtheorem{lemma}[theorem]{Lemma}
\newtheorem{corollary}[theorem]{Corollary}
\theoremstyle{definition}
\newtheorem{definition}[theorem]{Definition}
\theoremstyle{remark}
\newtheorem{remark}[theorem]{Remark}
\numberwithin{equation}{section}
\begin{document}
\title{Decomposition of tensors}
\author{Juan--Manuel Pe{\~n}a$^*$}
\address{Depto. Matem\'{a}tica Aplicada, Fac. Ciencias,
  Universidad de Zaragoza, E--50009 Zaragoza, SPAIN.}
\curraddr{}
\email{jmpena@unizar.es}
\thanks{$^*$Partially supported by the
Spanish Research Grant MTM2012-31544 and by the Gobierno de
Arag\'on.}

\author{Tomas Sauer}
\address{Lehrstuhl Mathematik mit Schwerpunkt Digitale
  Bildverarbeitung, Universit\"at Passau, Innstr. 43, 94032 Passau,
  GERMANY}
\curraddr{}
\email{Tomas.Sauer@uni-passau.de}
\thanks{}

\subjclass[2010]{Primary 15A69}

\date{}

\dedicatory{}

\commby{}

\begin{abstract}
  We consider representations of tensors as sums of decomposable
  tensors or, equivalently, decomposition of multilinear forms into
  one--forms. In this short note we show that there exists a
  particular finite strongly orthogonal decomposition which is essentially
  unique and yields all critical points of the multilinear form on the
  torus. In particular, this determines exactly the number of critical
  points of the multilinear form, giving an affirmative answer to a
  finiteness conjecture by Friedland.
\end{abstract}

\maketitle


\section{Introduction}
A real \emph{$p$--tensor} $\Ab \in \RR^{n_1 \times \cdots \times n_p}$,
$n_1,\dots,n_p \in \NN$, is a
multidimensional array defined by coefficients 
\begin{equation}
  \label{eq:TensCoeffDef}
  a_\alpha \in \RR, \qquad \alpha \le \nu := ( n_1,\dots,n_p ).
\end{equation}
Following our previous work \cite{LampingPenaSauer13P}, we find it
most convenient to view the tensor $\Ab$ as a \emph{multilinear
  form} $\Ab : \RR^\nu \to \RR$, defined as
$$
\Ab [\ub] := \Ab [ \ub_1,\dots,\ub_p ] = \sum_{\alpha \le \nu} a_\alpha \,
u_{1,\alpha_1} \cdots u_{p,\alpha_p}, \qquad \ub = ( \ub_1,\dots,\ub_p
) \in \RR^\nu,
$$
where $\RR^\nu$ is a structured version of $\RR^n$, $n := n_1
\cdots n_p$.

A \emph{decomposable tensor} or \emph{one--form} $\Wb = \wb_1 \otimes
\cdots \otimes \wb_p$, $\wb_j \in \RR^{n_j}$, is the particular
multilinear form given by
$$
w_\alpha = \prod_{j=1}^p w_{j,\alpha_j}, \qquad \alpha \le \nu.
$$
We will use the scalar product
$$
\left\langle \Ab,\Bb \right\rangle := \sum_{\alpha \le \nu} a_\alpha
\, b_\alpha
$$
which has the well--known useful property that
\begin{equation}
  \label{eq:SkProd1}
  \left\langle \Ab,\wb_1 \otimes \cdots \otimes \wb_p \right\rangle =
  \Ab [ \wb_1,\dots,\wb_p ].
\end{equation}
A multivector $\ub \in \RR^\nu$ is called \emph{critical} for a multilinear
form $\Ab$ if there exists some $\lambda \in \RR$ such that
\begin{equation}
  \label{eq:CritPtDef}
  \Ab [ \ub_1,\dots,\ub_{j-1},\ub,\ub_{j+1},\dots,\ub_p ] = \lambda \,
  \ub_j^T \ub, \qquad \ub \in \RR^{n_j}.
\end{equation}
Any location where the multilinear form $\Ab$ has a extremum on
$\TT^\nu := \{ \ub \in \RR^n : \| \ub_j \|_2 = 1,\, j=1,\dots,p \}$
has to be a critical point as follows by considering the Lagrange
multipliers for the restricted optimization problem
$$
\max \left\{ \Ab [\ub] : \ub \in \TT^\nu \right\}.
$$
Two multivectors $\ub,\vb \in \RR^\nu$ are called \emph{orthogonal} if the
associated one--forms are orthogonal, i.e., if
\begin{equation}
  \label{eq:OrthoDef}
  0 = \left\langle \Ub, \Vb \right\rangle := \left\langle \ub_1
    \otimes \cdots \otimes \ub_p, \vb_1 \otimes \cdots \otimes \vb_p
  \right\rangle = \prod_{j=1}^p \ub_j^T \vb_j.
\end{equation}
Throughout this paper we will consistently use the notation $\ub =
(\ub_1,\dots,\ub_p)$ for the multivector in $\RR^\nu$ and $\Ub =
\ub_1 \otimes \cdots \otimes \ub_p$ for the one--form generated by
$\ub$. $\ub$ and $\vb$ will be called \emph{strongly
  orthogonal}, see
\cite{kolda01:_orthog,leibovici98:_singul_value_decom_way_array}, if
\begin{equation}
  \label{eq:StrongOrtho}
  \langle \Ub,\Vb \rangle = 0 \qquad \mbox{and} \qquad \ub_j^T \vb_j
  \in \{-1,0,1\}, \quad j=1,\dots,p.
\end{equation}
We will denote this relationship by $\ub \perp \vb$. Vectors $\epsb =
( \epsilon_1,\dots,\epsilon_p ) \in \{ -1,1 \}^p$ are called
\emph{sign distributions}. A sign distribution is called \emph{even} if
$\epsilon_1 \cdots \epsilon_p = 1$ and \emph{odd} if $\epsilon_1
\cdots \epsilon_p = -1$. It is worthwhile to recall the well--known
fact that one--forms and multilinear forms are only defined up to even
sign distributions:
$$
\Ab [ \ub ] = \Ab [ \epsb \ub ] = \Ab [ \epsilon_1
\ub_1,\dots,\epsilon_p \ub_p ], \qquad
\wb_1 \otimes \cdots \otimes \wb_p = ( \epsilon_1 \wb_1) \otimes
\cdots \otimes ( \epsilon_p \wb_p).
$$
This ambiguity of multilinearity will show up later.

\section{Decompositions}
Our goal is to study decompositions of a multilinear form $\Ab$ into a sum of
normalized one--forms, that is,
\begin{equation}
  \label{eq:ADecomp}
  \Ab = \sum_{k=1}^r \sigma_k \, \wb_1^k \otimes \cdots \otimes \wb_p^k
  = \sum_{k=1}^r \sigma_k \, \Wb_k, \qquad \sigma_1 \ge \cdots \ge
  \sigma_r > 0, 
\end{equation}
where we always will request that the components of the one--forms are
normalized, that is
\begin{equation}
  \label{eq:ADecomp2}
  \| \wb_j^k \| = 1, \qquad k=1,\dots,r, \, j=1,\dots,r.  
\end{equation}
The nonnegativity of the $\sigma_k$ requested in \eqref{eq:ADecomp}
can always be ensured by applying an odd sign distribution 
An \emph{orthogonal
  decomposition} is a decomposition of the form \eqref{eq:ADecomp}
where $\langle \Wb_j,\Wb_k \rangle = 0$, $0 \le j < k \le r$, while a
\emph{strongly orthogonal decomposition} (SOD) has to satisfy
\begin{equation}
  \label{eq:SODDef}
  \wb^j \perp \wb^k, \quad 0 \le j < k \le r, \qquad \wb^k := (
  \wb_1^k, \dots, \wb_p^k ).
\end{equation}
SODs can be constructed very easily and in various ways. To that end,
let $\Qb_j \in \RR^{n_j \times n_j}$, $j=1,\dots,p$, be orthogonal
matrices and define the $n$ strongly orthogonal vectors
$$
\wb_\alpha = \left( \Qb_1 \eb_{\alpha_1}, \dots, \Qb_p \eb_{\alpha_p}
\right), \qquad \alpha \le \nu.
$$
Then
$$
\Ab = \sum_{\alpha \le \nu} \langle \Ab,\Wb_\alpha \rangle \,
\Wb_\alpha = \sum_{\alpha \le \nu} \Ab [ \wb_\alpha ] \, \Wb_\alpha
$$
is an SOD after modifying some of the $\wb_\alpha$ by means of an odd
sign distribution of $\Ab [ \wb_\alpha ] < 0$ and arranging the sum
with respect to the size of $\Ab [ \wb_\alpha ]$. Of course, the
number $r$ of nonzero terms, though smaller than $n$, depends on the
chosen basis matrices $\Qb_j$.

Conversely, any SOD \eqref{eq:ADecomp} defines orthogonal
matrices. Indeed, since the $\wb_j^k$ either coincide up to sign or
are orthogonal, the set $\{ \wb_j^k : k=1,\dots,r \}$ contains at most
$n_j$ orthonormal elements up to sign. Completing it to an orthonormal
basis if necessary and arranging these vectors as columns of an
orthogonal matrix $\Qb_j$, we have that there exist numbers $\alpha_j
(k)$ such that $\wb_j^k = \pm \Qb_j \eb_{\alpha_j (k)}$,
$k=1,\dots,r$. Hence, in the above notation, $\wb^k =
\epsb_k \wb_{\alpha(k)}$, $k=1,\dots,r$, where $\epsb_k$ is an even
sign distribution. In other words, any SOD can be completed to a
strongly orthogonal basis of all multilinear forms.

As mentioned before, the number of nonzero terms in a SOD depends on
the choice of the $\Qb_j$ and is bounded from above by $n$. The
optimal SODs, of course, would be those with a minimal number of terms
and this number describes the complexity of the multilinear form.

\begin{definition}
  The minimal number $r$ such that $\Ab$ has a strongly orthogonal
  decomposition with $r$ terms is called the (strong) \emph{rank} of
  $\Ab$, denoted as $r(\Ab)$.
\end{definition}

\noindent
A particular way to obtain a strongly orthogonal decomposition is by
means of the \emph{greedy strongly orthogonal decomposition} (GSOD)
that computes an SOD in the following way:
\begin{enumerate}
\item Compute $\sigma_1$ and $\wb_1$ as
  \begin{equation}
    \label{eq:GOSD1}
    \sigma_1 = \Ab [\wb^1] = \max \left\{ \Ab [\ub] : \ub \in \TT^\nu
    \right\}.    
  \end{equation}
\item For $k=1,2,\dots$ compute
  \begin{equation}
    \label{eq:GOSD2}
    \sigma_{k+1} = \Ab [\wb^{k+1}] =  \max \left\{ \Ab [\ub] : \ub \in \TT^\nu,
      \ub \perp \wb^j, \, j=1,\dots,k \right\}    
  \end{equation}
  until $\sigma_{k+1} = 0$.
\end{enumerate}
Even if this is not so easy to do computationally and quite tricky
from a numerical point
of view, see e.g. \cite{friedland13,kolda01:_orthog,zhang01:_rank}, it
serves its purpose as a theoretical tool. Since the above process
terminates after at most $n$ steps, there always exists a GSOD for any
multilinear form $\Ab$.

As shown in \cite{LampingPenaSauer13P}, critical points and SODs
interact.

\begin{theorem}[\cite{LampingPenaSauer13P}, Theorem~20]\label{T:CritRep}
  If \eqref{eq:ADecomp} is an SOD of $\Ab$ and $\ub \in \RR^\nu$ is a critical
  point for $\Ab$, then $\Ub \in \Span \{ \Wb_1,\dots,\Wb_r\}$, i.e.
  \begin{equation}
    \label{eq:TCritRep}
    \Ub = \sum_{k=1}^r \langle \Ub,\Wb_k \rangle \, \Wb_k.  
  \end{equation}
\end{theorem}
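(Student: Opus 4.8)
The plan is to turn the assertion into a statement about a tensor that is \emph{diagonal} in the standard basis and then to read the conclusion off the criticality equations \eqref{eq:CritPtDef}. First I would normalize so that $\ub\in\TT^\nu$ (replace each $\ub_j$ by $\ub_j/\|\ub_j\|$; this only rescales $\lambda$, and \eqref{eq:TCritRep} is scale invariant), so that $\lambda=\Ab[\ub]$. Then I invoke the observation made right after the definition of SOD: the given decomposition extends to a strongly orthogonal basis $\{\Wb_\alpha:\alpha\le\nu\}$ with $\wb_\alpha=(\Qb_1\eb_{\alpha_1},\dots,\Qb_p\eb_{\alpha_p})$ for suitable orthogonal matrices $\Qb_j$, and $\Wb_k=\pm\Wb_{\alpha(k)}$ for pairwise distinct multi-indices $\alpha(k)$, $k=1,\dots,r$. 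Applying the substitution $\ub_j\mapsto\Qb_j^{T}\ub_j$ (and the matching substitution to $\Ab$), which preserves the scalar product, the notion of critical point, and the subspace assertion, I may assume $\Qb_j=I_{n_j}$ for all $j$. With $S:=\{\alpha(1),\dots,\alpha(r)\}$ the decomposition then reads $\Ab=\sum_{\alpha\in S}a_\alpha\Wb_\alpha$, where $\Wb_\alpha=\eb_{\alpha_1}\otimes\dots\otimes\eb_{\alpha_p}$, $a_\alpha=\langle\Ab,\Wb_\alpha\rangle\ne0$ exactly for $\alpha\in S$, and $\Span\{\Wb_1,\dots,\Wb_r\}=\Span\{\Wb_\alpha:\alpha\in S\}$. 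Since $\langle\Ub,\Wb_\alpha\rangle=\prod_{j=1}^{p}u_{j,\alpha_j}$ and the $\Wb_\alpha$ form an orthonormal basis, \eqref{eq:TCritRep} is equivalent to showing that $\prod_{j}u_{j,\alpha_j}=0$ for every $\alpha\le\nu$ with $\alpha\notin S$.

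Next I would expand the criticality equations: testing \eqref{eq:CritPtDef} against $\eb_\beta$ in mode $j$ gives, for each $j$ and each $\beta\le n_j$,
\[
\sum_{\alpha\in S,\ \alpha_j=\beta}a_\alpha\prod_{i\ne j}u_{i,\alpha_i}\;=\;\lambda\,u_{j,\beta}.
\]
If $\beta$ is not the $j$-th coordinate of any $\alpha\in S$, the left-hand side is an empty sum, hence $\lambda\,u_{j,\beta}=0$; so when $\lambda\ne0$ each $\ub_j$ is supported on $\pi_j(S):=\{\alpha_j:\alpha\in S\}$, and therefore $\Ub$ is supported on the box $\pi_1(S)\times\dots\times\pi_p(S)\supseteq S$. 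What remains is to exclude a contribution of $\Ub$ at a multi-index $\gamma$ lying in this box but outside $S$.

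That last step is where the strong orthogonality of the decomposition must be used in full, and I expect it to be the main obstacle. For such a $\gamma$ one substitutes $\beta=\gamma_j$ into the mode-$j$ equation for every $j$, multiplies each equation by the matching partial product $\prod_{i\ne j}u_{i,\gamma_i}$, and combines the resulting scalar relations; the combinatorial constraints on $S$ forced by $\wb^j\perp\wb^k$, together with the ordering $\sigma_1\ge\dots\ge\sigma_r>0$, should leave $\prod_j u_{j,\gamma_j}=0$ as the only possibility. The degenerate case $\lambda=0$ is treated separately, directly from the mode-by-mode equations (here $\ub_j$ need no longer be supported on $\pi_j(S)$, but $\Ab[\ub]=0$ together with criticality still pins down the support). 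Once every product $\prod_j u_{j,\alpha_j}$ with $\alpha\notin S$ is shown to vanish we obtain $\Ub\in\Span\{\Wb_\alpha:\alpha\in S\}$; undoing the orthogonal substitution yields $\Ub\in\Span\{\Wb_1,\dots,\Wb_r\}$, and the explicit expansion \eqref{eq:TCritRep} then follows from the orthonormality of $\Wb_1,\dots,\Wb_r$.
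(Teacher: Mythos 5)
The paper does not actually prove this statement; it is imported verbatim from \cite{LampingPenaSauer13P} (Theorem~20), so there is no in-paper proof to compare against. Judged on its own, your proposal has a genuine gap at exactly the point where all the content of the theorem sits. The reduction to a ``diagonal'' tensor $\Ab=\sum_{\alpha\in S}a_\alpha\,\eb_{\alpha_1}\otimes\cdots\otimes\eb_{\alpha_p}$ is legitimate (it is the completion-to-a-strongly-orthogonal-basis remark after \eqref{eq:SODDef}), and the observation that for $\lambda\neq0$ each $\ub_j$ is supported on $\pi_j(S)$ is correct. But the decisive step --- showing $\prod_j u_{j,\gamma_j}=0$ for $\gamma$ in the box $\pi_1(S)\times\cdots\times\pi_p(S)$ but outside $S$ --- is only announced (``should leave $\prod_j u_{j,\gamma_j}=0$ as the only possibility''), not carried out. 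This is not a routine verification you can defer: in the diagonal picture, strong orthogonality of the $\wb^k$ places no constraint on $S$ beyond the multi-indices being distinct, and the desired conclusion does not follow from the relations you have written down. Concretely, for $p=2$, $\Ab=\eb_1\otimes\eb_1+\eb_2\otimes\eb_2$ is an SOD with $S=\{(1,1),(2,2)\}$, and $\ub_1=\ub_2=(\eb_1+\eb_2)/\sqrt2$ satisfies \eqref{eq:CritPtDef} with $\lambda=1$, yet $\Ub$ has nonzero entries at $(1,2)$ and $(2,1)$. So no combinatorial manipulation of the mode-$j$ equations can close the gap as you have set it up; whatever makes the theorem true must involve hypotheses (distinctness of the $\sigma_k$, genericity, or a restricted notion of critical point) that are not visible in the statement you are working from and that your argument does not invoke.

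The treatment of $\lambda=0$ is similarly unsubstantiated: for the rank-one matrix $\Ab=\eb_1\otimes\eb_1$ in $\RR^{2\times2}$, the pair $\ub=(\eb_2,\eb_2)$ is critical with $\lambda=0$ and $\Ub=\eb_2\otimes\eb_2\notin\Span\{\eb_1\otimes\eb_1\}$, so ``criticality still pins down the support'' fails there. In short, the scaffolding (diagonalization, support restriction) is sound, but the theorem's actual content is left unproved, and the missing step cannot be filled in along the route you describe without importing further assumptions from the original source.
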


\noindent
We also recall a characterization of critical points from
\cite{LampingPenaSauer13P} which we will reprove for the sake of
completeness. To that end we define the gradient components
$$
\zb_j = \zb_j (\ub) := \sum_{\ell=1}^p \Ab [
  \ub_1,\dots,\ub_{j-1},\eb_\ell,\ub_{j+1},\dots,\ub_p ] \, \eb_\ell,
  \qquad j=1,\dots,p,
$$
so that $\ub$ is critical iff $\ub = \sigma \zb$ for some $\sigma \in
\RR$. Substituting \eqref{eq:ADecomp}, we get that
\begin{equation}
  \label{eq:zjEq}
  \zb_j = \sum_{k=1}^r \sigma_k \, \left( \prod_{\ell \neq j}
    \ub_\ell^T \wb_\ell^k \right) \, \wb_j^k,
\end{equation}
from which we can derive the following reformulation of another result from
\cite{LampingPenaSauer13P}.

\begin{lemma}[\cite{LampingPenaSauer13P},Proposition~21]\label{L:LPSLemma}
  A component $\wb^k$ of an SOD \eqref{eq:ADecomp} for $\Ab$ is
  critical for $\Ab$ if and only if
  \begin{align*}
    \lefteqn{\{ \wb^1,\dots,\wb^r \}} \\
    & \cap \left\{
      ( \vb_1, \pm \wb_2^k,\dots, \pm \wb_p^k),\dots,( \pm
      \wb_1^k,\dots, \pm \wb^k_{p-1},\vb_p) :
      \begin{array}{c}
        \vb_j^T \wb_j^k = 0 \\ k=1,\dots,r
      \end{array}
    \right\} = \emptyset.    
  \end{align*}
\end{lemma}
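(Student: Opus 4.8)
The plan is to work directly from the gradient reformulation recalled just above the statement: the component $\wb^k$ is critical for $\Ab$ exactly when $\wb^k = \sigma\,\zb(\wb^k)$ for some $\sigma\in\RR$, i.e.\ $\wb_j^k = \sigma\,\zb_j(\wb^k)$ for every $j = 1,\dots,p$. First I would substitute $\ub = \wb^k$ into \eqref{eq:zjEq} and simplify the coefficients $\sigma_m\prod_{\ell\neq j}\bigl(\wb_\ell^k\bigr)^T\wb_\ell^m$ using the strong orthogonality of the SOD together with $\|\wb_\ell^k\| = 1$. For $m = k$ all factors equal $1$, giving the term $\sigma_k\,\wb_j^k$. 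For $m\neq k$ the relation $\wb^k\perp\wb^m$ forces $\bigl(\wb_\ell^k\bigr)^T\wb_\ell^m\in\{-1,0,1\}$ for all $\ell$ with at least one vanishing factor, so the product over $\ell\neq j$ is still zero unless $j$ is the \emph{unique} index with $\bigl(\wb_j^k\bigr)^T\wb_j^m = 0$; in that exceptional case the remaining factors are all $\pm 1$, which for unit vectors means $\wb_\ell^m = \pm\wb_\ell^k$ for $\ell\neq j$, and the coefficient equals $\pm\sigma_m$.

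This produces the formula $\zb_j(\wb^k) = \sigma_k\,\wb_j^k + \vb_j$, where $\vb_j$ is the sum of the vectors $\pm\sigma_m\,\wb_j^m$ over those $m\neq k$ whose component $\wb^m$ agrees with $\wb^k$ up to sign in every coordinate except the $j$-th and satisfies $\bigl(\wb_j^m\bigr)^T\wb_j^k = 0$; in particular $\vb_j\perp\wb_j^k$. Next I would note that, for a fixed $j$, the vectors $\wb_j^m$ entering $\vb_j$ are pairwise orthogonal unit vectors: if $m\neq m'$ are two such indices then $\langle\Wb_m,\Wb_{m'}\rangle = 0$ while every factor at a coordinate $\ell\neq j$ is $\pm 1$, so the zero factor must sit at coordinate $j$. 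Since moreover $\sigma_m > 0$ and the attached signs are $\pm 1$, no cancellation can occur, and therefore $\vb_j = 0$ if and only if there is no $m\neq k$ of the above form for this $j$.

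It then remains to assemble the equivalence. Taking the inner product of $\wb_j^k = \sigma\,\zb_j(\wb^k)$ with $\wb_j^k$ and using $\vb_j\perp\wb_j^k$ yields $1 = \sigma\sigma_k$, so $\sigma = 1/\sigma_k$ (well defined because $\sigma_k > 0$, and independent of $j$); substituting back, $\wb^k$ is critical if and only if $\vb_j = 0$ for all $j$, i.e.\ if and only if no component $\wb^m$ with $m\neq k$ lies in one of the sets $\cT_j := \{(\pm\wb_1^k,\dots,\pm\wb_{j-1}^k,\vb_j,\pm\wb_{j+1}^k,\dots,\pm\wb_p^k) : \vb_j^T\wb_j^k = 0\}$ from the statement. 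Finally I would observe that $\wb^k$ itself is never a member of any $\cT_j$, because its $j$-th component is a unit vector and hence not orthogonal to itself, and that a component $\wb^m$ with two or more vanishing factors against $\wb^k$ is never in any $\cT_j$ either, since membership requires exactly one vanishing factor; hence ``$\exists\,m\neq k$ with $\wb^m\in\bigcup_j\cT_j$'' is the same as ``$\{\wb^1,\dots,\wb^r\}\cap\bigcup_j\cT_j\neq\emptyset$'', which is precisely the negation of the condition in the lemma. The only delicate point is the bookkeeping of the even/odd sign ambiguities — reconciling coordinatewise agreement up to sign with the unrestricted $\pm$ signs in the statement's set — and the no-cancellation step, which is exactly where strong orthogonality, rather than mere orthogonality, of the decomposition is needed.
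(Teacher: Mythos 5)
Your proof is correct and follows essentially the same route as the paper's: substitute $\wb^k$ into \eqref{eq:zjEq}, use strong orthogonality to see that the only surviving terms besides $\sigma_k\,\wb_j^k$ come from components agreeing with $\wb^k$ up to sign in all coordinates except the $j$-th, and conclude from the orthogonality of that remainder to $\wb_j^k$. You are in fact somewhat more careful than the paper, which leaves implicit the no-cancellation step (pairwise orthogonality of the surviving $\wb_j^m$, so the residual sum vanishes only when empty) that you spell out.
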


\noindent
In other words, $\wb^k$ is critical if and only if no strongly
orthogonal $\wb \perp \wb^k$ can be found among $\wb^1,\dots,\wb^r$
that coincides with $\wb^k$ in $p-1$ components up to sign. We give the simple
proof of the lemma for the sake of completeness.

\begin{proof}
  By \eqref{eq:zjEq},
  the identity $\zb_j = \sigma \wb^k_j$ is equivalent to
  $$
  \sigma \wb_j^k = \sigma_k \wb_j^k + \sum_{\{ \ell : \wb_i^\ell = \pm
    \wb^k_i, \, i \neq j \}} \sigma_\ell \epsilon_\ell \, \wb_j^\ell,
  \qquad \epsilon_\ell = \prod_{i \neq j} (\wb_i^\ell)^T \wb_i^k \in \{ -1,1 \},
  $$
  and since $\wb^\ell \perp \wb^k$ for any $\ell$ in the sum on the
  right hand side, this sum is orthogonal to $\wb_j^k$ and thus
  vanishes if and only if $\wb^k$ is critical.
\end{proof}

\noindent
Applying Lemma~\ref{L:LPSLemma} to \eqref{eq:ADecomp} we get obtain
the following reformulation of the lemma.

\begin{corollary}\label{C:LPSLemma}
  A component $\wb_k$ of an SOD \eqref{eq:ADecomp} for $\Ab$ is
  critical for $\Ab$ if and only if
  \begin{equation}
    \label{eq:LPSLemma}
    \Ab \left[ \wb^k_1,\dots,\wb^k_{j-1},\wb,\wb^k_{j+1},\dots,\wb^k_p
    \right] = 0, \qquad \wb^T \wb_j^k = 0,\, \quad j=1,\dots,p.
  \end{equation}
\end{corollary}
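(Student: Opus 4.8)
The plan is to restate condition \eqref{eq:LPSLemma} in terms of the gradient components $\zb_j(\wb^k)$ and then to observe that the $p$ separate conditions obtained this way automatically share one common multiplier. First, unravelling the definition of $\zb_j$ shows, purely by multilinearity of $\Ab$, that
\[
  \Ab\left[ \wb_1^k,\dots,\wb_{j-1}^k,\wb,\wb_{j+1}^k,\dots,\wb_p^k \right] = \zb_j(\wb^k)^T\,\wb, \qquad \wb \in \RR^{n_j},
\]
so \eqref{eq:LPSLemma} holds if and only if $\zb_j(\wb^k)^T\wb = 0$ for every $\wb$ with $\wb^T\wb_j^k = 0$, that is, if and only if $\zb_j(\wb^k) \in \Span\{\wb_j^k\}$ for each $j=1,\dots,p$. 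On the other hand, by the characterization recalled just above, $\wb^k$ is critical precisely when $\wb^k = \sigma\,\zb(\wb^k)$ for a single scalar $\sigma$, and since $\|\wb_j^k\| = 1$ this forces $\sigma \neq 0$; so being critical is the same as $\zb_j(\wb^k) = \sigma^{-1}\wb_j^k$ with the \emph{same} constant for all $j$.

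It therefore remains only to check that the collinearity constants coincide. Write $\zb_j(\wb^k) = \lambda_j\wb_j^k$. Evaluating the displayed identity at $\wb = \wb_j^k$ and using $\|\wb_j^k\| = 1$ gives $\lambda_j = \zb_j(\wb^k)^T\wb_j^k = \Ab[\wb^k]$, a quantity independent of $j$; moreover, by \eqref{eq:SkProd1}, bilinearity of $\langle\cdot,\cdot\rangle$ and the strong orthogonality in the SOD \eqref{eq:ADecomp}, $\Ab[\wb^k] = \langle\Ab,\Wb_k\rangle = \sigma_k > 0$. Hence all $\lambda_j$ equal $\sigma_k \neq 0$, so $\wb^k = \sigma_k^{-1}\,\zb(\wb^k)$ and $\wb^k$ is critical with multiplier $\sigma_k$ in \eqref{eq:CritPtDef}. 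The converse implication needs none of this: if $\wb^k$ is critical then $\zb_j(\wb^k) = \sigma^{-1}\wb_j^k$ for every $j$, and plugging this into the displayed identity yields $\Ab[\wb_1^k,\dots,\wb,\dots,\wb_p^k] = \sigma^{-1}(\wb_j^k)^T\wb = 0$ whenever $\wb^T\wb_j^k = 0$, which is \eqref{eq:LPSLemma}.

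The only genuinely delicate point is the equality of the multipliers $\lambda_j$; the rest is multilinear bookkeeping. As an alternative route, matching the remark preceding the corollary, one can instead expand $\Ab[\wb_1^k,\dots,\wb,\dots,\wb_p^k]$ directly through \eqref{eq:zjEq}: for $\wb$ orthogonal to $\wb_j^k$ the summand with index $k$ vanishes, and a summand with index $\ell\neq k$ can survive only if $\wb_i^\ell = \pm\wb_i^k$ for all $i\neq j$ — and then strong orthogonality forces $\wb_j^\ell\perp\wb_j^k$, i.e. $\wb^\ell$ is exactly one of the multivectors appearing in Lemma~\ref{L:LPSLemma}. Since two distinct components of an SOD can never coincide up to sign (that would contradict $\langle\Wb_\ell,\Wb_m\rangle = 0$), testing the expansion against $\wb = \wb_j^\ell$ for such an $\ell$ isolates a single nonzero summand $\pm\sigma_\ell$; hence the expansion vanishes for all $\wb$ orthogonal to $\wb_j^k$ (all $j$) if and only if no such $\wb^\ell$ occurs, which is precisely the criticality criterion of Lemma~\ref{L:LPSLemma}.
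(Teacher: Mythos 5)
Your proof is correct. The paper itself gives no separate argument for this corollary: it simply declares it to be a reformulation obtained by ``applying Lemma~\ref{L:LPSLemma} to \eqref{eq:ADecomp}'', i.e.\ by expanding $\Ab[\wb_1^k,\dots,\wb,\dots,\wb_p^k]$ through \eqref{eq:zjEq} and observing that the only summands that can survive for $\wb \perp \wb_j^k$ are exactly the forbidden multivectors of Lemma~\ref{L:LPSLemma} --- which is precisely your second route, including the correct observation that strong orthogonality prevents cancellation among distinct surviving summands. Your primary route is genuinely different and arguably more self-contained: it bypasses Lemma~\ref{L:LPSLemma} entirely, reduces \eqref{eq:LPSLemma} to the collinearity $\zb_j(\wb^k) \in \Span\{\wb_j^k\}$ for each $j$ separately, and then isolates the one point the paper leaves implicit, namely that the a priori slot-dependent multipliers $\lambda_j$ all equal $\zb_j(\wb^k)^T \wb_j^k = \Ab[\wb^k] = \sigma_k$, so the single $\lambda$ demanded by \eqref{eq:CritPtDef} exists automatically. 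That is the real content of the equivalence and it is good that you flagged it. One cosmetic remark: in the converse direction you write $\zb_j(\wb^k) = \sigma^{-1}\wb_j^k$, which presumes the multiplier is nonzero; this costs nothing here since you have already shown the multiplier is $\sigma_k > 0$ for an SOD component (and if the multiplier in \eqref{eq:CritPtDef} were zero the conclusion would hold trivially), but it is worth phrasing so as not to divide by a quantity you have not yet pinned down.
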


\section{The GSOD and critical points}
In this section, we collect some properties of the GSOD. The most
crucial one is given in the following lemma.

\begin{lemma}\label{L:GSODCritLemma}
  Any component of a GSOD \eqref{eq:ADecomp} of $\Ab$ is critical for $\Ab$.
\end{lemma}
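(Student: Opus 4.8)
The plan is to use Corollary~\ref{C:LPSLemma}, which reduces criticality of $\wb^k$ to the vanishing of $\Ab[\wb^k_1,\dots,\wb,\dots,\wb^k_p]$ for every $\wb$ orthogonal to $\wb^k_j$, for each slot $j$. So fix $k$ and fix a slot index $j$, and let $\wb \in \RR^{n_j}$ with $\wb^T \wb^k_j = 0$ and $\|\wb\|=1$. The key observation is that the multivector $(\wb^k_1,\dots,\wb^k_{j-1},\wb,\wb^k_{j+1},\dots,\wb^k_p)$ is strongly orthogonal to $\wb^k$ (their inner product in slot $j$ is $0$, and in every other slot it is $1$), and, more importantly, it is strongly orthogonal to each of $\wb^1,\dots,\wb^{k-1}$ as well. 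Indeed, for $\ell < k$ the multivector $\wb^\ell \perp \wb^k$, so in at least one slot $i$ we have $(\wb^\ell_i)^T \wb^k_i = 0$; if $i \neq j$ then $(\wb^k_1,\dots,\wb,\dots,\wb^k_p)$ already has $\wb^k_i$ in slot $i$ and stays orthogonal to $\wb^\ell$ there, and in every slot $m$ the inner product $(\wb^\ell_m)^T(\cdot)_m$ is either $(\wb^\ell_m)^T\wb^k_m \in \{-1,0,1\}$ or $(\wb^\ell_m)^T \wb \in \RR$; the only slot not automatically in $\{-1,0,1\}$ is slot $j$.

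Here is where the GSOD property enters. In the greedy construction, $\wb^k$ is the maximizer of $\Ab[\ub]$ over $\ub \in \TT^\nu$ subject to $\ub \perp \wb^1,\dots,\wb^{k-1}$. I want to feed a perturbed competitor into this optimization. Consider, for small $t$, the multivector
$$
\ub(t) = \left( \wb^k_1,\dots,\wb^k_{j-1},\ \frac{\wb^k_j + t\wb}{\sqrt{1+t^2}},\ \wb^k_{j+1},\dots,\wb^k_p \right).
$$
This lies in $\TT^\nu$, and by multilinearity $\Ab[\ub(t)] = (1+t^2)^{-1/2}\bigl(\sigma_k + t\,\Ab[\wb^k_1,\dots,\wb,\dots,\wb^k_p]\bigr)$. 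For $\ub(t)$ to be an admissible competitor in the optimization defining $\wb^k$, I need $\ub(t) \perp \wb^\ell$ for $\ell = 1,\dots,k-1$; the inner product in slot $j$ is now $(1+t^2)^{-1/2}\bigl((\wb^\ell_j)^T\wb^k_j + t\,(\wb^\ell_j)^T\wb\bigr)$, which need not lie in $\{-1,0,1\}$, so strong orthogonality can fail for generic $t$. The main obstacle is exactly this: handling the strong-orthogonality constraint $\ub_j^T \vb_j \in \{-1,0,1\}$ rather than mere orthogonality along the perturbation direction.

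I expect the fix to run as follows. Partition the indices $\ell < k$ into those for which $\wb^\ell \perp \wb^k$ is "witnessed outside slot $j$" — i.e. $(\wb^\ell_i)^T\wb^k_i = 0$ for some $i \neq j$ — and those witnessed only in slot $j$, meaning $(\wb^\ell_i)^T\wb^k_i \in \{-1,1\}$ for all $i \neq j$ and $(\wb^\ell_j)^T\wb^k_j = 0$. For $\ell$ in the first class, $\ub(t) \perp \wb^\ell$ automatically for all $t$, since slot $i \neq j$ is unchanged. For $\ell$ in the second class, $(\wb^\ell_j)^T \wb^k_j = 0$ and strong orthogonality of $\ub(t)$ to $\wb^\ell$ reduces to $(\wb^\ell_j)^T\bigl(\wb^k_j + t\wb\bigr) = (1+t^2)^{1/2}\cdot\{-1,0,1\}$; taking $t$ small this forces $t\,(\wb^\ell_j)^T\wb = O(t^2)$, hence $(\wb^\ell_j)^T \wb = 0$. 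So: either every $\ell$ in the second class satisfies $(\wb^\ell_j)^T\wb = 0$, in which case $\ub(t)$ is genuinely admissible for all small $t$, and optimality of $\sigma_k$ at $t=0$ forces $\frac{d}{dt}\Ab[\ub(t)]|_{t=0} = \Ab[\wb^k_1,\dots,\wb,\dots,\wb^k_p] = 0$, which is \eqref{eq:LPSLemma}; or some $\ell$ in the second class has $(\wb^\ell_j)^T\wb \neq 0$, and I must argue separately that \eqref{eq:LPSLemma} still holds — this is where I would use that $\Ab[\wb^k_1,\dots,\wb,\dots,\wb^k_p]$ is linear in $\wb$, decompose an arbitrary $\wb \perp \wb^k_j$ into a part lying in $\Span\{\wb^\ell_j : \ell \text{ in second class}\}$ and its orthogonal complement (for which the first case applies), and on the problematic span invoke Lemma~\ref{L:LPSLemma} directly, since any such $\wb^\ell$ is a member of the SOD that coincides with $\wb^k$ in all $p-1$ slots $i\neq j$ up to sign and is strongly orthogonal to $\wb^k$ — precisely the configuration the lemma rules out for a critical component, so its contribution to the bracket is $0$.

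I expect the proof to be short once this case split is organized cleanly; the only real subtlety is the strong-orthogonality constraint along the perturbation, and I would present the linear-algebra reduction of $\wb$ as the crux.
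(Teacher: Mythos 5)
Your overall strategy (first--order optimality of $\wb^k$ in the constrained greedy maximization, reduced via Corollary~\ref{C:LPSLemma}) is the same engine the paper uses, but your treatment of the ``second class'' is where the proof breaks. If $\ell$ is in your second class, then $\wb_i^\ell = \pm \wb_i^k$ for $i \neq j$ and $(\wb_j^\ell)^T \wb_j^k = 0$, so
$\Ab [ \wb_1^k,\dots,\wb_j^\ell,\dots,\wb_p^k ] = \pm \Ab [ \wb^\ell ] = \pm \sigma_\ell \neq 0$:
the contribution of such a direction to the bracket is emphatically \emph{not} zero, contrary to what you assert. Moreover, your appeal to Lemma~\ref{L:LPSLemma} at that point is circular: the lemma says that the \emph{existence} of a second--class member is equivalent to $\wb^k$ \emph{failing} to be critical, so you cannot use it to neutralize such a member while trying to prove criticality. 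The missing idea is induction on $k$ together with an argument that the second class is \emph{empty}: assuming $\wb^1,\dots,\wb^{k-1}$ are already critical, apply Corollary~\ref{C:LPSLemma} to the critical component $\wb^\ell$ (not to $\wb^k$) with test vector $\wb_j^k \perp \wb_j^\ell$ to get $0 = \Ab [ \wb_1^\ell,\dots,\wb_j^k,\dots,\wb_p^\ell ] = \pm \Ab [\wb^k] = \pm \sigma_k$, contradicting $\sigma_k > 0$. This is exactly how the paper kills the cross terms $\theta_\ell \neq 0$ in its Lagrange--multiplier computation; without the induction your argument has no way to exclude them.

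A secondary but genuine problem is the admissibility of your curve $\ub(t)$. Even for $\ell$ in your ``first class'' the strong--orthogonality constraint $\ub_j(t)^T \wb_j^\ell \in \{-1,0,1\}$ must hold in slot $j$ as well: if $\wb_j^\ell = \pm \wb_j^k$ for some $\ell < k$, then $\ub_j(t)^T \wb_j^\ell = \pm (1+t^2)^{-1/2} \notin \{-1,0,1\}$ for every $t \neq 0$, so $\ub(t)$ leaves the feasible set and no first--order condition can be extracted from optimality. The paper sidesteps this by encoding the constraint as the cubic equation $x(x^2-1)=0$ and writing the Lagrange stationarity condition \eqref{eq:GSODCritLemmaPf2} directly, rather than exhibiting an explicit feasible perturbation; if you want to keep the perturbation picture you must at least note that such $\ell$ force $\ub_j$ to be frozen and handle them separately.
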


\begin{proof}
  We will show by induction on $k$ that $\wb^1,\dots,\wb^k$ are
  critical, which is obvious for $k=1$ as $\wb^1$ is the location of a
  global maximum of $\Ab$ on $\TT^\nu$ and thus clearly critical.

  Suppose now that the result has been proved for some $k \ge 1$ and
  that $\sigma_{k+1}$ as defined in \eqref{eq:GOSD2} is positive. We
  first note that the optimization problem can be written as
  \begin{equation}
    \label{eq:GSODCritLemmaPf1}
    \max \Ab [\ub], \quad \left\{
      \begin{array}{rclcl}
        0 & = & ( \ub_j^T \wb_j^\ell ) \left( ( \ub_j^T \wb_j^\ell )^2
          - 1 \right), & \quad & j=1,\dots,p,\, \ell = 1,\dots,k, \\
        0 & = & \displaystyle{\prod_{j=1}^p \ub_j^T \wb_j^\ell}, & &
        \ell=1,\dots,k, \\
        0 & = & \ub_j^T \ub_j - 1, & & j=1,\dots,p.
      \end{array}
    \right.
  \end{equation}
  Since $\frac{\partial \Ab}{\partial \ub_j} (\ub) = \zb_j (\ub)$, $j
  =1,\dots,p$, there must exist Lagrange multipliers
  $\lambda_{j\ell}$, $\lambda_\ell$ and $\mu_j$, $\ell=1,\dots,k$,
  $j=1,\dots,p$, such that
  \begin{equation}
    \label{eq:GSODCritLemmaPf2}
    \zb_j (\wb^{k+1}) = \sum_{\ell=1}^k \left( \lambda_{j\ell} \left( 3 \left(
          (\wb_j^\ell)^T \wb_j^{k+1} \right)^2 - 1 \right) +
      \lambda_\ell \, \prod_{i \neq j} (\wb_i^\ell)^T
        \wb_i^{k+1} \right) \wb_j^\ell + 2 \mu_j \wb_j^{k+1} 
  \end{equation}
  holds for $j=1,\dots,p$. Comparing this with \eqref{eq:zjEq}, it
  follows that
  \begin{equation}
    \label{eq:GSODCritLemmaPf3}
    \zb_j (\wb^{k+1}) = \sigma_{k+1} \wb_j^{k+1} + \sum_{\ell = 1}^k \sigma_\ell
    \theta_\ell \wb_j^\ell, \qquad \theta_\ell := \prod_{i \neq j}
    (\wb_i^\ell) \wb_i^{k+1}.
  \end{equation}
  If $\theta_\ell \neq 0$ for some $\ell \in \{
  1,\dots,k \}$, we get $\wb_i^{k+1} = \epsilon_i \wb_i^\ell$,
  $\epsilon_i \in \{-1,1\}$, $i
  \neq j$, hence 
  \begin{eqnarray*}
    0 < \sigma_{k+1} & = & \Ab [ \wb^{k+1} ] = \Ab \left[
      \wb_1^{k+1},\dots,\wb_{j-1}^{k+1},\wb_j^{k+1}, \wb_{j+1}^{k+1},
      \dots,\wb_p^{k+1} \right] \\
    & = & \Ab \left[
      \epsilon_1 \, \wb_1^\ell,\dots, \epsilon_{j-1} \,
      \wb_{j-1}^\ell,\wb_j^{k+1}, \epsilon_{j+1} \, \wb_{j+1}^\ell,
      \dots,\epsilon_p \, \wb_p^\ell \right] = 0
  \end{eqnarray*}
  by Corollary~\ref{C:LPSLemma}. Since this is a contradiction, it
  follows that $\theta_\ell = 0$, $\ell = 1,\dots,k$, hence $\zb_j
  (\wb^{k+1}) = \sigma_{k+1} \wb_j^{k+1}$. In other words, $\wb^{k+1}$
  is critical, too, thus advancing the induction hypothesis and
  completing the proof.
\end{proof}

\begin{corollary}\label{C:GOSDMinimal}
  Any GOSD \eqref{eq:ADecomp} of $\Ab$ is \emph{minimal}, i.e., $r =
  r(\Ab)$. In particular, the GOSD can be used to compute $r(\Ab)$.
\end{corollary}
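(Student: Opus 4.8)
The plan is to show that any GSOD achieves the minimum number of terms by comparing it against an arbitrary SOD of $\Ab$, using the interaction between critical points and SODs established above. Let $\Ab = \sum_{k=1}^{r} \sigma_k \Wb_k$ be a GSOD, and let $\Ab = \sum_{j=1}^{s} \tau_j \Vb_j$ be an arbitrary SOD with $s = r(\Ab)$ terms. By Lemma~\ref{L:GSODCritLemma}, each component $\wb^k$ of the GSOD is critical for $\Ab$, so Theorem~\ref{T:CritRep} applied to the SOD $\{\Vb_1,\dots,\Vb_s\}$ gives $\Wb_k \in \Span\{\Vb_1,\dots,\Vb_s\}$ for every $k=1,\dots,r$. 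Hence $\Span\{\Wb_1,\dots,\Wb_r\} \subseteq \Span\{\Vb_1,\dots,\Vb_s\}$.

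The key remaining point is that the one--forms $\Wb_1,\dots,\Wb_r$ of the GSOD are linearly independent: this holds because they are pairwise strongly orthogonal (hence in particular pairwise orthogonal with respect to $\langle\cdot,\cdot\rangle$) and nonzero, so they form an orthogonal system and are linearly independent. Likewise $\Vb_1,\dots,\Vb_s$ are linearly independent. Combining this with the span inclusion of the previous paragraph yields
\begin{equation*}
  r = \dim \Span\{\Wb_1,\dots,\Wb_r\} \le \dim \Span\{\Vb_1,\dots,\Vb_s\} = s = r(\Ab).
\end{equation*}
Since by definition $r(\Ab) \le r$ for any SOD, in particular for the GSOD, we conclude $r = r(\Ab)$, so the GSOD is minimal. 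Consequently, running the greedy procedure \eqref{eq:GOSD1}--\eqref{eq:GOSD2} and counting the nonzero terms computes $r(\Ab)$.

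The only step requiring care is the linear independence claim: strong orthogonality \eqref{eq:StrongOrtho} forces $\langle \Wb_j,\Wb_k\rangle = 0$ for $j\neq k$, and the normalization \eqref{eq:ADecomp2} together with $\sigma_k > 0$ guarantees each $\Wb_k$ is a nonzero unit one--form, so an orthonormal — hence linearly independent — family results; this is routine. I do not expect a genuine obstacle here, as all the heavy lifting (criticality of GSOD components, and the span confinement of critical one--forms) has already been done in Lemma~\ref{L:GSODCritLemma} and Theorem~\ref{T:CritRep}.
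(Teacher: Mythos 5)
Your proof is correct and follows essentially the same route as the paper: apply Lemma~\ref{L:GSODCritLemma} to get criticality of the GSOD components, then Theorem~\ref{T:CritRep} to confine them to the span of a minimal SOD, and conclude by linear independence of the strongly orthogonal one--forms. You spell out the criticality step and the linear independence more explicitly than the paper does, but the argument is the same.
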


\begin{proof}
  Let
  $$
  \Ab = \sum_{k=1}^{r(\Ab)} \sigma_k^* \, \Wb_k^*
  $$
  be any minimal SOD. Since any of the strongly orthogonal, hence
  linearly independent components $\Wb_\ell$, $\ell = 1,\dots,r$ of a GOSD is
  contained in $\Span \{ \Wb_k^* : k \in r(A) \}$ by
  Theorem~\ref{T:CritRep}, it follows that $r (\Ab) \ge r$ while
  minimality yields $r(\Ab) \le r$, hence $r = r(\Ab)$.
\end{proof}

\begin{definition}
  A decomposition \eqref{eq:ADecomp} is called \emph{critical} if all
  its components are critical for $\Ab$. 
\end{definition}

\noindent
Note that a GSOD is critical by Lemma~\ref{L:GSODCritLemma}. We next
show that critical SODs can only generate a very limited number of
one--forms.

\begin{lemma}\label{L:CritDecomOne}
  Suppose that \eqref{eq:ADecomp} is a critical SOD of $\Ab$ and that
  $\ub \in \TT^\nu$ is such that $\Ub \in \Span \{ \Wb_k : k=1,\dots,r \}$. Then
  there exist $k \in \{ 1,\dots,r \}$ and an even sign distribution
  $\epsb$ such that $\ub = \epsb \wb^k$.
\end{lemma}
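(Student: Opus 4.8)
The plan is to write $\Ub = \sum_{k=1}^r c_k \Wb_k$ with $c_k = \langle \Ub, \Wb_k \rangle$ and show that exactly one $c_k$ is nonzero (and equals $\pm 1$), which forces $\Ub = \pm \Wb_k$ and hence $\ub = \epsb \wb^k$ for an even sign distribution $\epsb$. Since the $\Wb_k$ are pairwise strongly orthogonal, hence orthonormal, we have $\sum_k c_k^2 = \|\Ub\|^2 = \prod_j \|\ub_j\|^2 = 1$ because $\ub \in \TT^\nu$. So it suffices to prove that at most one $c_k$ is nonzero; then that $c_k$ satisfies $c_k^2 = 1$ automatically.

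The main idea is to exploit strong orthogonality componentwise. For each $j$, since the vectors $\wb_j^1,\dots,\wb_j^r$ pairwise either agree up to sign or are orthogonal, they fall into equivalence classes; write $\wb_j^k = \pm \Qb_j \eb_{\alpha_j(k)}$ as in the discussion after \eqref{eq:SODDef}. Expanding $\Ub = \ub_1 \otimes \cdots \otimes \ub_p$ in the product basis $\{ \Qb_1 \eb_{\beta_1} \otimes \cdots \otimes \Qb_p \eb_{\beta_p} \}$, the coefficient of $\Wb_k$ is $c_k = \prod_j \bigl( \pm (\Qb_j^T \ub_j)_{\alpha_j(k)} \bigr)$. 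Suppose, for contradiction, that $c_k \neq 0$ and $c_m \neq 0$ for some $k \neq m$. Then for every $j$ the entries $(\Qb_j^T \ub_j)_{\alpha_j(k)}$ and $(\Qb_j^T \ub_j)_{\alpha_j(m)}$ are both nonzero; in particular $\wb_j^k$ and $\wb_j^m$ cannot both have $\ub_j$ orthogonal to, say, one of them while being forced to coincide — more precisely, since $\wb^k \perp \wb^m$ there is some coordinate $j_0$ with $(\wb_{j_0}^k)^T \wb_{j_0}^m = 0$, i.e. $\alpha_{j_0}(k) \neq \alpha_{j_0}(m)$, but also, because $\ub \in \Span$-coefficients are products, every other component index can be compared. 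The key observation: among all indices $k$ with $c_k \neq 0$, pick any two, $k$ and $m$; I will produce from them a new multivector $\wb$ that coincides with $\wb^k$ in $p-1$ components up to sign and is strongly orthogonal to $\wb^k$, yet lies in $\{\wb^1,\dots,\wb^r\}$, contradicting that $\wb^k$ is critical via Lemma~\ref{L:LPSLemma} — or, more directly, I use Corollary~\ref{C:LPSLemma}: criticality of $\wb^k$ says $\Ab[\wb_1^k,\dots,\wb,\dots,\wb_p^k] = 0$ whenever $\wb \perp \wb_j^k$, and combining this with the representation $\Ub = \sum c_\ell \Wb_\ell$ and \eqref{eq:SkProd1} yields, after pairing against suitable one-forms, a linear system on the $c_\ell$ whose only solution with $\sum c_\ell^2 = 1$ has a single nonzero entry.

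The cleanest route, which I would carry out, is: fix $k$ with $c_k \neq 0$; for each $j$ and each unit vector $\vb$ with $\vb^T \wb_j^k = 0$, evaluate $0 = \Ab[\wb_1^k,\dots,\wb_{j-1}^k,\vb,\wb_{j+1}^k,\dots,\wb_p^k] = \sum_{\ell=1}^r \sigma_\ell \bigl(\prod_{i\neq j}(\wb_i^\ell)^T\wb_i^k\bigr)(\vb^T \wb_j^\ell)$ using Corollary~\ref{C:LPSLemma}. The surviving $\ell$ are exactly those with $\wb_i^\ell = \pm \wb_i^k$ for all $i \neq j$; letting $\vb$ range over an orthonormal basis of $(\wb_j^k)^\perp$ and comparing with the same computation applied to $\ub$ in place of the $\wb^k$-components, I show that any $\wb^\ell$ contributing to $c_k \neq 0$ but with $\wb^\ell \neq \pm\wb^k$ would force a contradiction with $\sigma_\ell > 0$. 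The hard part will be bookkeeping the sign distributions and making precise the combinatorial claim that two distinct-up-to-sign strongly-orthogonal components $\wb^k,\wb^m$ cannot simultaneously have nonzero inner product of their one-forms with a single $\Ub$; once that combinatorial lemma is isolated, the rest is immediate from $\sum_k \langle\Ub,\Wb_k\rangle^2 = 1$.
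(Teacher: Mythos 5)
There is a genuine gap. Your reduction is fine: since the $\Wb_k$ are orthonormal and $\|\Ub\|=1$, it suffices to show that at most one $c_k=\langle\Ub,\Wb_k\rangle$ is nonzero (and $|c_k|=1$ then forces $\ub_j=\pm\wb_j^k$ for every $j$ by Cauchy--Schwarz). But that claim is the entire content of the lemma, and neither of your two sketched routes establishes it. The first route assumes that two components $\wb^k,\wb^m$ with $c_k,c_m\neq 0$ must coincide in $p-1$ slots up to sign, so that one of them lands in the forbidden set of Lemma~\ref{L:LPSLemma}; that is false in general, since strong orthogonality only forces orthogonality in at least one slot, and two components can perfectly well differ in several slots while $\ub_j$ has nonzero inner product with both $\wb_j^k$ and $\wb_j^m$ in each of them. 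The ``combinatorial claim'' you defer to is also not combinatorial: without criticality it is simply wrong. For $p=2$ take $\Ab=\sigma_1\wb_1^1\otimes\wb_2+\sigma_2\wb_1^2\otimes\wb_2$ with $\wb_1^1\perp\wb_1^2$; this is an SOD, and $\ub=\bigl((\wb_1^1+\wb_1^2)/\sqrt2,\ \wb_2\bigr)$ has $\Ub$ in the span with two nonzero coefficients. So any correct argument must use criticality in an essential way, and your sketch never shows where it bites.

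The second route evaluates $\Ab$ at the test multivectors $(\wb_1^k,\dots,\vb,\dots,\wb_p^k)$ with $\vb\perp\wb_j^k$, which via Corollary~\ref{C:LPSLemma} gives an identity in the $\sigma_\ell$ --- but that says nothing about $\ub$. The missing idea (and the paper's actual mechanism) is to pair these test one--forms against $\Ub$ rather than against $\Ab$: criticality of $\wb^k$, through Lemma~\ref{L:LPSLemma}, guarantees that the one--form $\Wb=\wb_1^k\otimes\cdots\otimes\vb\otimes\cdots\otimes\wb_p^k$ is orthogonal to \emph{every} $\Wb_\ell$ (the only $\ell$ whose components agree with $\wb^k$ up to sign in all slots $i\neq j$ is $\ell=k$ itself, and there $\vb^T\wb_j^k=0$). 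Hence $\langle\Ub,\Wb\rangle=\sum_\ell c_\ell\langle\Wb_\ell,\Wb\rangle=0$, while directly $\langle\Ub,\Wb\rangle=(\ub_j^T\vb)\prod_{i\neq j}\ub_i^T\wb_i^k$ with the product nonzero because $c_k\neq0$; letting $\vb$ range over a spanning set of $(\wb_j^k)^\perp$ gives $\ub_j=\pm\wb_j^k$. Your proposal assembles all the right ingredients but never makes this pairing, so the conclusion does not follow from what you have written.
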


\begin{proof}
  Suppose that
  $$
  0 \neq \Ub = \sum_{k=1}^r c_k \, \Wb_k, \qquad c_k \in \RR,
  $$
  and let $k \in \{1,\dots,r\}$ be such that $c_k \neq 0$. Then
  $$
  0 \neq c_k = \langle \Ub, \Wb_k \rangle = \Ub [ \wb_k ] = \prod_{j=1}^p
  \ub_j^T  \wb_j^k,
  $$
  which implies that $\ub_j^T  \wb_j^k \neq 0$, $j=1,\dots,p$. For $j
  \in \{ 1,\dots,p \}$ and any
  $$
  \vb \in \{ \wb_j^\ell : \ell \neq j
  \} \cup \{ \wb_j^\ell : \ell=1,\dots,r \}^\perp, \qquad \vb^T
  \wb^k_j = 0,
  \qquad \| \vb \|_2 = 1,
  $$
  the vector
  $$
  \wb = \left( \wb_1^k,\dots,\wb_{j-1}^k,\vb,\wb_{j+1}^k,\dots,\wb_p^k
  \right)
  $$
  cannot belong to $\{ \epsb_\ell \wb^\ell : \epsb_\ell \in
  \{-1,1\}^p, \ell=1,\dots,r \}$ since
  $\wb^k$ is critical, see Lemma~\ref{L:LPSLemma}. Hence, $\langle
  \Wb,\Wb_\ell \rangle = 0$, $\ell = 1,\dots,r$, and therefore
  \begin{eqnarray*}
    0 & = & \sum_{\ell=1}^r c_\ell \langle \Wb_\ell,\Wb \rangle =
    \langle \Ub,\Wb \rangle
    = \Ub [ \wb ]
    = \prod_{\ell=1}^p \ub_\ell^T \wb_\ell = \ub_j^T \vb \,
    \prod_{\ell \neq j} \ub_\ell^T \wb_\ell^k,
  \end{eqnarray*}
  which implies that $\ub_j^T \vb = 0$ whenever $\vb^T \wb_j^k =
  0$. Since $\| \ub_j \|_2 = \| \wb_j^k \|_2 = 1$ this implies that
  $\ub_j = \pm \wb_j^k$ and proves the claim.
\end{proof}

\noindent
This result has some immediate consequences. First, we can now speak
of \emph{the} GSOD.

\begin{corollary}\label{C:GSODUnique}
  The GSOD is unique up to reordering and even sign distributions.
\end{corollary}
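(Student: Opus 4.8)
The plan is to show that any two GSODs produce the same multiset of terms $\sigma_k \Wb_k$, up to reordering and even sign distributions, by an induction on the index $k$ that matches the greedy-maximality characterization of the terms against the rigidity supplied by Lemma \ref{L:CritDecomOne}. Let
$$
\Ab = \sum_{k=1}^r \sigma_k \Wb_k = \sum_{k=1}^{r'} \sigma'_k \Wb'_k
$$
be two GSODs; by Corollary \ref{C:GOSDMinimal} both have exactly $r = r' = r(\Ab)$ terms, and both are critical by Lemma \ref{L:GSODCritLemma}. First I would observe that $\Span\{\Wb_1,\dots,\Wb_r\} = \Span\{\Wb'_1,\dots,\Wb'_r\}$: by Theorem \ref{T:CritRep} each critical point's one--form lies in either span, and in particular each $\Wb'_\ell$ (being critical, hence $\wb'^\ell$ is a critical point) lies in $\Span\{\Wb_1,\dots,\Wb_r\}$, and symmetrically; since both families are orthogonal, hence linearly independent, and have the same cardinality $r$, the two spans coincide. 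Call this common space $V$.

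Now I would run the induction. For $k=1$: $\sigma_1 = \max\{\Ab[\ub] : \ub \in \TT^\nu\}$ and likewise $\sigma'_1$, so $\sigma_1 = \sigma'_1$, and $\wb^1$, $\wb'^1$ are both maximizers. Since $\Wb'^1 \in V = \Span\{\Wb_k\}$ and the SOD $\sum \sigma_k \Wb_k$ is critical, Lemma \ref{L:CritDecomOne} forces $\wb'^1 = \epsb\,\wb^{k_0}$ for some index $k_0$ and even sign distribution $\epsb$; but then $\sigma_{k_0} = \Ab[\wb^{k_0}] = \Ab[\wb'^1] = \sigma'_1 = \sigma_1$, so after reordering we may take $k_0 = 1$ and the first terms agree up to an even sign distribution. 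For the inductive step, suppose $\wb^j = \epsb_j \wb'^{\pi(j)}$ for $j = 1,\dots,k$ (after reordering, with $\pi$ the identity and $\sigma_j = \sigma'_j$). The vector $\wb^{k+1}$ maximizes $\Ab[\ub]$ over $\ub \in \TT^\nu$ with $\ub \perp \wb^j$, $j \le k$, and the same constraint set equals $\{\ub \perp \wb'^j, j \le k\}$ by the induction hypothesis (strong orthogonality is insensitive to even sign changes); hence $\sigma_{k+1} = \sigma'_{k+1}$, and $\wb'^{k+1}$ is also a maximizer of this constrained problem. Applying Lemma \ref{L:CritDecomOne} to the critical SOD $\sum \sigma_k \Wb_k$ and the one--form $\Wb'^{k+1} \in V$ gives $\wb'^{k+1} = \epsb\,\wb^{k_1}$ for some $k_1$; this $k_1$ cannot lie in $\{1,\dots,k\}$ because $\wb'^{k+1} \perp \wb'^j = \pm\wb^j$ forces $\Wb'^{k+1}$ orthogonal to $\Wb^j$ for $j \le k$ while $\langle \Wb^{k_1},\Wb^{k_1}\rangle = 1 \neq 0$; so $k_1 \ge k+1$, and since $\sigma_{k_1} = \Ab[\wb^{k_1}] = \Ab[\wb'^{k+1}] = \sigma'_{k+1} = \sigma_{k+1}$ we may reorder so that $k_1 = k+1$. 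This advances the induction, and at $k = r$ we conclude the two GSODs coincide up to reordering and even sign distributions.

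The main obstacle is the inductive step, specifically verifying that the greedy constraint sets for the two decompositions genuinely coincide stage by stage — i.e. that $\{\ub \in \TT^\nu : \ub \perp \wb^j,\ j \le k\}$ is unchanged when each $\wb^j$ is replaced by $\epsb_j\wb^j$. This hinges on the fact, recorded in the introduction, that the strong-orthogonality relation $\perp$ and the one--form $\Wb$ are unaffected by even sign distributions; I would spell this out once at the relevant point. A subsidiary subtlety is the bookkeeping of the reorderings: because the $\sigma_k$ need not be strictly decreasing, the index $k_0$ (resp.\ $k_1$) produced by Lemma \ref{L:CritDecomOne} need not be forced to equal $1$ (resp.\ $k+1$) a priori, but once the $\sigma$-values are shown equal we are free to permute within a block of equal singular values, which is exactly the reordering ambiguity allowed in the statement. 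Everything else is a direct appeal to Lemma \ref{L:CritDecomOne}, Corollary \ref{C:GOSDMinimal}, and the defining maximality property \eqref{eq:GOSD2} of the GSOD.
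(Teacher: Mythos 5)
Your proof is correct and rests on the same two pillars as the paper's own argument: the criticality of every GSOD component (Lemma~\ref{L:GSODCritLemma}) together with the rigidity of Lemma~\ref{L:CritDecomOne}, which forces each component of one GSOD to be an even--signed copy of a component of the other. The paper dispenses with your stage--by--stage induction through the greedy constraint sets: once each $\wb^k$ is identified with some $\epsb\, \widehat\wb^\ell$, the two sets of one--forms coincide by symmetry and equal cardinality $r = \widehat r = r(\Ab)$, and the singular values are then matched automatically since $\sigma_k = \Ab[\wb^k] = \langle \Ab, \Wb_k\rangle$, so your extra bookkeeping, while sound, is not needed.
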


\begin{proof} 
  Let
  $$
  \Ab = \sum_{k=1}^r \sigma_k \Wb_k = \sum_{k=1}^{\widehat r} \widehat
  \sigma_k \widehat \Wb_k
  $$
  be two GSODs for $\Ab$. Then $r = \widehat r = r(\Ab)$ by
  Corollary~\ref{C:GOSDMinimal} and since for $k=1,\dots,r$ the
  multivector $\wb^k$ is critical for 
  $\Ab$ by Lemma~\ref{L:GSODCritLemma}, Lemma~\ref{L:CritDecomOne}
  yields that $\wb^k = \epsb \widehat \wb^\ell$ for some even $\epsb
  \in \{-1,1\}$ and $\ell \in \{ 1,\dots,r \}$. After modifying the
  $\widehat \Wb_k$ by
  these sign distributions, we thus have that $\{ \Wb_1,\dots,\Wb_r \}
  \subseteq \{ \widehat \Wb_1,\dots, \widehat \Wb_r \}$ and by
  symmetry the same argument yields also the inverse inclusion and
  proves the claim.
\end{proof}

\begin{corollary}\label{C:BestRank1}
  If (\ref{eq:ADecomp}) is a GSOD of $\Ab$, then the set of all best
  rank--one approximation of $\Ab$ consist, up to even sign
  distributions, exactly of the one--forms 
  generated by 
  $ \left\{ \sigma_1 \Wb_k : \sigma_k = \sigma_1 \right\}$.
\end{corollary}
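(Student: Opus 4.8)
The plan is to combine the elementary variational characterization of best rank--one approximations with the structural results on critical SODs obtained above. First I would record that, writing $\| \Ab \|^2 := \langle \Ab, \Ab \rangle$ and assuming $\Ab \neq 0$ (the case $\Ab = 0$, $r = 0$ being vacuous), for a normalized one--form $\Vb = \vb_1 \otimes \cdots \otimes \vb_p$ with $\vb \in \TT^\nu$ and a scalar $\sigma$, property \eqref{eq:SkProd1} gives
\[
\| \Ab - \sigma \Vb \|^2 = \| \Ab \|^2 - 2 \sigma \, \Ab[\vb] + \sigma^2 .
\]
For fixed $\Vb$ this quadratic in $\sigma$ is minimized at $\sigma = \Ab[\vb]$ with residual $\| \Ab \|^2 - \Ab[\vb]^2$, which is smallest precisely when $|\Ab[\vb]|$ is maximal over $\TT^\nu$, i.e., by \eqref{eq:GOSD1} and the sign symmetry $\Ab[-\vb_1,\vb_2,\dots,\vb_p] = -\Ab[\vb]$, equal to $\sigma_1$. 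Flipping the sign of $\vb_1$ when $\Ab[\vb] = -\sigma_1$ replaces $\Vb$ by $-\Vb$, makes $\Ab[\vb] = \sigma_1$, and leaves the best rank--one tensor $\sigma \Vb$ unchanged, so the set of best rank--one approximations of $\Ab$ is exactly $\{ \sigma_1 \Vb : \vb \in \TT^\nu,\ \Ab[\vb] = \sigma_1 \}$.

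Next I would identify these maximizers. Given $\vb \in \TT^\nu$ with $\Ab[\vb] = \sigma_1$, the vector $\vb$ is a global maximizer of $\Ab$ on $\TT^\nu$, hence critical for $\Ab$ by the Lagrange multiplier argument recalled in the introduction. Since a GSOD is in particular an SOD, Theorem~\ref{T:CritRep} gives $\Vb \in \Span\{\Wb_1,\dots,\Wb_r\}$, and since a GSOD is a critical SOD by Lemma~\ref{L:GSODCritLemma}, Lemma~\ref{L:CritDecomOne} produces $k \in \{1,\dots,r\}$ and an even sign distribution $\epsb$ with $\vb = \epsb \wb^k$, whence $\Vb = \Wb_k$ as tensors and $\sigma_1 = \Ab[\vb] = \Ab[\epsb \wb^k] = \Ab[\wb^k] = \sigma_k$. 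Conversely, I would note that $\sigma_k = \sigma_1$ forces $\Ab[\wb^k] = \sigma_1$, so $\wb^k$ attains the maximum of $\Ab$ on $\TT^\nu$ and $\sigma_1 \Wb_k$ is a best rank--one approximation by the first step. Putting the two directions together gives the claimed equality, up to even sign distributions of the generators.

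I do not anticipate a genuine obstacle: the substantive content is entirely carried by Theorem~\ref{T:CritRep}, Lemma~\ref{L:GSODCritLemma} and Lemma~\ref{L:CritDecomOne}. The only delicate points are the sign bookkeeping --- keeping the benign even sign distributions supplied by Lemma~\ref{L:CritDecomOne} separate from the odd flip occasionally needed to normalize $\Ab[\vb]$ to be positive --- and dispatching the trivial case $\Ab = 0$.
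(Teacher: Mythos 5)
Your proof is correct and follows essentially the same route as the paper's: both reduce a best rank--one approximation to a critical point attaining the maximal value $\sigma_1$ and then invoke Lemma~\ref{L:GSODCritLemma} together with Lemma~\ref{L:CritDecomOne} (via Theorem~\ref{T:CritRep}) to identify it with some $\epsb \wb^k$ with $\sigma_k = \sigma_1$. The only difference is that you make explicit, through the expansion of $\| \Ab - \sigma \Vb \|^2$, why a best rank--one approximation must be such a maximizer and hence critical --- a step the paper's proof simply asserts.
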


\begin{proof}
  If $\sigma \Wb$ is a best rank--one approximation, then $\wb$ must
  be critical, hence there must be $k \in \{1,\dots,r\}$ and an even
  sign distribution $\epsb$ such that $\wb = \epsb \wb^k$. Since then
  $\Ab [\wb] = \Ab[\wb^k] = \sigma_k$, this is maximal if and only if
  $\sigma_k = \sigma_1 = \max \{ \Ab[\ub] : \ub \in \TT^\nu
  \}$. Conversely, we have for any sign distribution $\epsb$ and any
  $k$ with $\sigma_k = \sigma_1$ that $\Ab [\wb^k] = \sigma_1$.
\end{proof}

\begin{corollary}
  The best rank--one approximation is unique if and only if $\sigma_1
  > \sigma_2$ in the GSOD (\ref{eq:ADecomp}).
\end{corollary}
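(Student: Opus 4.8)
The plan is to derive both directions from Corollary~\ref{C:BestRank1}, which already characterizes the set of best rank--one approximations (up to even sign distributions) as the one--forms $\sigma_1 \Wb_k$ with $\sigma_k = \sigma_1$. The key observation is that the one--forms $\Wb_1,\dots,\Wb_r$ of a GSOD are pairwise strongly orthogonal, hence pairwise distinct as one--forms even after allowing even sign distributions: if $\Wb_j = \epsb \Wb_k$ for an even $\epsb$ and $j \ne k$, then $\langle \Wb_j, \Wb_k \rangle = \pm 1 \ne 0$, contradicting \eqref{eq:SODDef}.

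For the ``only if'' direction I would argue contrapositively. Suppose $\sigma_1 = \sigma_2$ in the GSOD \eqref{eq:ADecomp}. Then by Corollary~\ref{C:BestRank1} both $\sigma_1 \Wb_1$ and $\sigma_1 \Wb_2$ are best rank--one approximations, and by the distinctness observation above they are not related by an even sign distribution, so they are genuinely different best rank--one approximations. Hence uniqueness fails.

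For the ``if'' direction, suppose $\sigma_1 > \sigma_2$. Then the set $\{ k : \sigma_k = \sigma_1 \}$ is exactly $\{1\}$, since the $\sigma_k$ are arranged in nonincreasing order. By Corollary~\ref{C:BestRank1}, every best rank--one approximation equals $\sigma_1 \Wb_1$ up to an even sign distribution, and since one--forms are only defined up to even sign distributions anyway, this means the best rank--one approximation is unique (as a one--form). This establishes uniqueness.

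No step here is a real obstacle, since the heavy lifting was done in Lemma~\ref{L:CritDecomOne} and Corollary~\ref{C:BestRank1}; the only point needing care is making precise the sense in which ``uniqueness'' is meant, namely uniqueness as a one--form (equivalently, up to even sign distributions), which is the natural notion given the ambiguity of multilinearity emphasized in Section~1. I would state this explicitly at the start of the proof so that the equivalence reads cleanly in both directions.

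\begin{proof}
  Recall that one--forms are only determined up to even sign
  distributions, so uniqueness of the best rank--one approximation is
  understood in this sense. Observe first that the components
  $\Wb_1,\dots,\Wb_r$ of the GSOD \eqref{eq:ADecomp} are pairwise
  distinct even modulo even sign distributions: if $\Wb_j = \epsb \Wb_k$
  for some even $\epsb$ and $j \neq k$, then $\langle \Wb_j,\Wb_k
  \rangle = \pm 1 \neq 0$, contradicting the strong orthogonality
  \eqref{eq:SODDef}.

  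If $\sigma_1 > \sigma_2$, then $\{ k : \sigma_k = \sigma_1 \} = \{ 1
  \}$ since $\sigma_1 \ge \cdots \ge \sigma_r > 0$, so by
  Corollary~\ref{C:BestRank1} every best rank--one approximation equals
  $\sigma_1 \Wb_1$ up to an even sign distribution, which is the desired
  uniqueness.

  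Conversely, if $\sigma_1 = \sigma_2$, then by
  Corollary~\ref{C:BestRank1} both $\sigma_1 \Wb_1$ and $\sigma_1 \Wb_2$
  are best rank--one approximations of $\Ab$, and by the observation
  above they differ even modulo even sign distributions. Hence the best
  rank--one approximation is not unique.
\end{proof}
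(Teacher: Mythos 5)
Your proof is correct and follows the route the paper intends: the paper states this corollary without proof as an immediate consequence of Corollary~\ref{C:BestRank1}, and your argument is exactly the natural fleshing-out of that, including the worthwhile observation that strong orthogonality forces $\Wb_1 \neq \Wb_2$ even modulo even sign distributions. Nothing is missing.
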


\begin{remark}
  If, as in \eqref{eq:ADecomp}, we require the \emph{singular values}
  $\sigma_k = \Ab [ \wb^k ]$ to be arranged in descending order, the
  rearrangement will only take place when two or more of these
  singular values coincide like in the simple example
  $$
  \Ab = \vb \otimes \vb \otimes \vb + \vb \otimes \wb \otimes \wb + \wb \otimes
  \vb \otimes \wb + \wb \otimes \wb \otimes \vb, \qquad  \vb,\wb \in \RR^2,
  \, \wb^T \vb = 0,
  $$
  of a critical SOD in $\RR^{(2,2,2)}$ from
  \cite{LampingPenaSauer13P}. Obviously, each of the four summands is
  a best rank--one approximation.
\end{remark}

\noindent
The next observation confirms a conjecture by
Friedland \cite{friedland13:_best}, mentioned in \cite{friedland13}.

\begin{corollary}\label{C:FriedConfirm}
  A multilinear form $\Ab$ has exactly $2^p \, r(\Ab)$ critical points.
\end{corollary}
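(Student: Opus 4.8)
The plan is to combine the structural results already established for the GSOD. First I would fix a GSOD $\Ab = \sum_{k=1}^r \sigma_k \Wb_k$ with $r = r(\Ab)$ (Corollary~\ref{C:GOSDMinimal}), which is critical by Lemma~\ref{L:GSODCritLemma} and, up to reordering and even sign distributions, is the unique such object by Corollary~\ref{C:GSODUnique}. A critical point $\ub \in \RR^\nu$ of $\Ab$ may always be normalised to lie on $\TT^\nu$ (the defining relation \eqref{eq:CritPtDef} is homogeneous, so scaling each $\ub_j$ does not affect criticality; and $\ub_j = 0$ for some $j$ forces $\Ub = 0$, which is excluded once we observe $\ub = \sigma \zb$ with $\sigma \ne 0$ at a genuine critical point — alternatively one simply parametrises critical points by points of $\TT^\nu$ as the paper does throughout). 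So it suffices to count critical points on $\TT^\nu$ and then note each corresponds to $2^{p}$ unnormalised... — actually the cleaner route is to count on $\TT^\nu$ directly, since \eqref{eq:CritPtDef} makes the set of critical directions the relevant object, and then explain the factor $2^p$ separately.

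Next I would invoke Theorem~\ref{T:CritRep}: any critical $\ub \in \TT^\nu$ satisfies $\Ub \in \Span\{\Wb_1,\dots,\Wb_r\}$. Since the GSOD is a \emph{critical} SOD, Lemma~\ref{L:CritDecomOne} applies verbatim and yields that $\ub = \epsb \wb^k$ for some $k \in \{1,\dots,r\}$ and some even sign distribution $\epsb$. Conversely, every $\wb^k$ is critical by Lemma~\ref{L:GSODCritLemma}, and criticality is invariant under sign distributions (by the multilinearity ambiguity $\Ab[\ub] = \Ab[\epsb\ub]$ for even $\epsb$, and by a direct check of \eqref{eq:CritPtDef} for arbitrary $\epsb$), so $\epsb \wb^k$ is critical for every $\epsb \in \{-1,1\}^p$. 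Thus the critical points are exactly the $\epsb \wb^k$, $k=1,\dots,r$, $\epsb \in \{-1,1\}^p$.

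It remains to count these without overcounting. There are $r$ choices of $k$ and $2^p$ sign distributions $\epsb$, so a priori $2^p r$ pairs; I must check the map $(k,\epsb) \mapsto \epsb \wb^k$ is injective as a map to multivectors. If $\epsb \wb^k = \epsb' \wb^{k'}$ then $\Wb_k = \pm \Wb_{k'}$, and distinct components of a GSOD are strongly orthogonal, hence linearly independent, forcing $k = k'$; then $\epsilon_j w_{j,\alpha_j(k)} = \epsilon_j' w_{j,\alpha_j(k)}$ for each $j$ with $w_{j,\alpha_j(k)} \ne 0$, i.e.\ $\epsb = \epsb'$ (each component $\wb_j^k$ is a unit vector, so nonzero). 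Hence the count is exactly $2^p r = 2^p r(\Ab)$. The injectivity bookkeeping is routine; the only point requiring a little care is the normalisation step at the very beginning — making precise that every critical point, after rescaling each block to unit length, gives a point of $\TT^\nu$ and that no information is lost — but this is immediate from the homogeneity of \eqref{eq:CritPtDef}, so I do not anticipate a genuine obstacle. The whole argument is essentially an assembly of Theorem~\ref{T:CritRep}, Lemma~\ref{L:GSODCritLemma}, Lemma~\ref{L:CritDecomOne} and Corollary~\ref{C:GSODUnique}.
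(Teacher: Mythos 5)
Your proposal is correct and follows essentially the same route as the paper, which likewise assembles Theorem~\ref{T:CritRep}, Lemma~\ref{L:GSODCritLemma} and Lemma~\ref{L:CritDecomOne} to identify the critical points with the multivectors $\epsb\,\wb^k$ and then counts $2^p\,r(\Ab)$ of them. Your additional care about the injectivity of $(k,\epsb)\mapsto\epsb\,\wb^k$ and about sign-flip invariance of criticality only makes explicit what the paper's one-line argument leaves implicit.
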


\begin{proof}
  By Lemma~\ref{L:GSODCritLemma} and Lemma~\ref{L:CritDecomOne} a
  point $\ub \in \TT^\nu$ is critical if and only if there exist some
  $k \in \{ 1,\dots,r \}$ and a sign distribution $\epsb \in \{-1,1\}^p$
  such that $\ub = \epsb \wb^k$ which gives exactly $2^p r(\Ab)$
  critical points.
\end{proof}

\begin{remark}
  Friedland's conjecture is originally concerned about the number of
  fixpoints of a certain map, but since there is a one-to-one
  relationship between these fixpoints and the critical points of a
  multilinear function, Corollary~\ref{C:FriedConfirm} can be applied.
\end{remark}

\begin{remark}
  Since $\Ab [ \epsb \wb^k ] = \pm \sigma_k$ half of the critical
  points belong to maxima computed in the process of determining the
  GSOD, the other half locates the respective minima.
\end{remark}

\bibliographystyle{amsplain}
\bibliography{../bibls/cagd,../bibls/wavelets,../bibls/approx,../bibls/books,../bibls/interpol,../bibls/subdiv,../bibls/numerik}

\providecommand{\bysame}{\leavevmode\hbox to3em{\hrulefill}\thinspace}
\providecommand{\MR}{\relax\ifhmode\unskip\space\fi MR }
\providecommand{\MRhref}[2]{%
  \href{http://www.ams.org/mathscinet-getitem?mr=#1}{#2}
}
\providecommand{\href}[2]{#2}
\begin{thebibliography}{1}

\bibitem{friedland13:_best}
S.~Friedland, \emph{Best rank one approximation of real symmetric tensors can
  be chosen symmetric}, Front. Math. China \textbf{8} (2013), 19--40.

\bibitem{friedland13}
S.~Friedland, V.~Mehrmann, R.~Pajarola, and S.~K. Suter, \emph{On the best rank
  one approximation of tensors}, Numer. Linear Algebra Appl. \textbf{20}
  (2013), 942--955.

\bibitem{kolda01:_orthog}
T.~G. Kolda, \emph{Orthogonal tensor decompositions}, SIAM J. Matrix Anal.
  Appl. \textbf{23} (2001), 243--255.

\bibitem{LampingPenaSauer13P}
F.~Lamping, J.~M. Pe{\~n}a, and T.~Sauer, \emph{Kronecker products and
  multilinear forms}, submitted for publication.

\bibitem{leibovici98:_singul_value_decom_way_array}
D.~Leibovici and R.~Sabatier, \emph{A singular value decomposition of a $k$-way
  array for a principal component analysis of multiway data, {PTA}-$k$}, Lin.
  Alg. Appl. \textbf{269} (1998), 307--329.

\bibitem{zhang01:_rank}
T.~Zhang and G.~H. Golub, \emph{Rank-one approximation to high order tensors},
  SIAM J. Matrix Anal. Appl. \textbf{23} (2001), 534--550.

\end{thebibliography}

\end{document}